\documentclass[12pt]{amsart}
\usepackage{amssymb}
\usepackage{amsmath}
\usepackage{amsthm}
\usepackage{changepage}
\usepackage{caption}
\usepackage{array}
\usepackage{enumitem}
\usepackage{graphicx}
\usepackage{xcolor}
\newtheorem{theorem}{Theorem}[section]

\newtheorem{corollary}[theorem]{Corollary}

\newtheorem{prop}[theorem]{Proposition}

\usepackage{subfig}
\usepackage{hyperref}
 
\theoremstyle{definition}

\newtheorem{example}[theorem]{Example}

\theoremstyle{remark}
\newtheorem{remark}[theorem]{Remark}

\numberwithin{equation}{section}

\def\Ker{{\text{Ker}}}

\def\deg{{\text{deg}}}

\begin{document}

\title{Mordell--Weil groups of quasi-elliptic surfaces in characteristic 3}

\author{Toshiyuki Katsura}
\address{Graduate School of Mathematical Sciences, The University of Tokyo, Tokyo,
153-8914, Japan}
\email{tkatsura@g.ecc.u-tokyo.ac.jp}

\thanks{Research of the author is partially supported by JSPS Grant-in-Aid 
for Scientific Research (C) No.26K06737.}



\begin{abstract}
In characteristic $p =3$, we establish a concrete relationship between the torsion rank 
of the elementary abelian group of global sections of quasi-elliptic surfaces 
over the projective line ${\mathbf P}^1$ and the $p$-rank of certain hyperelliptic curves, 
thereby explaining a mysterious relationship
between these two invariants previously observed by I. Dolgachev. As an application, we determine
the torsion ranks of the Mordell--Weil groups of a class of quasi-elliptic surfaces.
\end{abstract}

\maketitle

\section{Introduction}
Unipotent groups over imperfect fields of positive characteristic are 
interesting objects
of study and are closely related to quasi-elliptic fibrations on smooth projective surfaces (\cite{BM} 
and \cite{CDL}). In \cite{D}, I. Dolgachev studied integral models of unipotent 
commutative algebraic groups over the field $K$ of rational functions 
of a Dedekind scheme $S$ over an algebraically closed field $k$
of positive characteristic and he discovered a mysterious relationship between the torsion rank of 
the elementary abelian group of global sections of unipotent algebraic goups and 
the $p$-rank of certain hyperelliptic curves in characteristic $p > 2$. In this paper, 
we take up this problem and establish a concrete relationship between the torsion rank of 
the elementary 
abelian group of global sections of quasi-elliptic surfaces over the projective line ${\mathbf P}^1$
and the $p$-rank of certain hyperelliptic curves in characteristic $p =3$.

In Section 2, we give a brief survey of quasi-elliptic surfaces 
over the projective line ${\mathbf P}^1$ in characteristic 3. We clarify
the normal forms of such surfaces and the equations in \cite{D}.
In Section 3, we clarify the relationship between the Frobenius map and the Cartier operator
for singular hyperelliptic curves in positive charateristic.
In Section 4, we prove that, in charactersitic 3, the Mordell--Weil group of of a quasi-elliptic surface 
with a section over ${\mathbf P}^1$ is isomorphic to a certain subgroup 
of the semisimple part of the first cohomology group ${\mathrm H}^1(C, {\mathcal O}_{C})$
of the structure sheaf of a certain
hyperelliptic curve $C$. In particular, if the quasi-elliptic surface is rational,
then the Mordell--Weil group is isomorphic to 
$\ker (F - {\rm id}) \subset {\mathrm H}^1(C, {\mathcal O}_{C})$. Here, $F$ is the Frobenius map
on ${\mathrm H}^1(C, {\mathcal O}_{C})$.
Finally, in Section 5, we apply our method to determine 
the torsion ranks of Mordell--Weil groups 
of quasi-elliptic K3 surfaces with 10 singular fibers of type ${\rm IV}$. 
In \cite[p. 25]{I}, Ito considered this problem and stated that, in this case,
only some examples 
would be given, since more complicated computations are required to determine
the torsions rank and the sections.
We apply our method to this case and provide a more complete picture 
of the possible torsion ranks and sections.

The author is deeply grateful to Professor I. Dolgachev for suggesting this topic.

\section{Preliminaries}
Let $k$ be an algebraically closed field of charactersitic 3. 
Let $f : S \longrightarrow {\mathbf P}^1$ be a relatively minimal quasi-elliptic 
surface with a section. Then $S$ is birationally equivalent to a surface define by
$$
      y^2 = x^3 + \varphi (t)
$$
for a suitable polynomial $\varphi (t) \in k[t]$ (cf. Miyanishi \cite{M}, Miyanishi-Ito \cite{MI}). 
Here, $t$ is a local parameter on the base curve ${\mathbf P}^1$.

Now make the change of coordinates 
$$
u = \frac{x}{y},\qquad v = \frac{1}{y}.
$$ 
Then, putting $a_{6k}(t) = \varphi (t)$,
we obtain 
\begin{equation}\label{equation}
 u^3 = v - a_{6k}(t) v^3.
\end{equation}
This is the equation appearing in \cite[Example 6.3]{D}. 
(The difference in sign is irrelevant.)
Therefore, any quasi-elliptic surface over ${\mathbf P}^1$  is birationally equivalent to a surface
given by the equation (\ref{equation}).

The group of sections of the quasi-elliptic surface $f : S \longrightarrow {\mathbf P}^1$
is called a Mordell--Weil group, which is denoted by ${\rm MW}(S)$.
Since $p = 3$ and $f : S \longrightarrow {\mathbf P}^1$ is quasi-elliptic, the order
of any non-zero element of ${\rm MW}(S)$ is $3$, and there exists a non-negative integer $r$
such that ${\rm MW}(S)\cong ({\mathbb Z}/3{\mathbb Z})^r$.
$r$ is called a torsion rank of ${\rm MW}(S)$ (cf. Ito \cite{I}).

\section{Frobenius map and Cartier operator}
We consider a hyperelliptic curve defined by
\begin{equation}\label{1}
    y^2 = a_{6n}(t)
\end{equation}
in the affine plane ${\mathbb A}^2$ where $a_{6n}(t)\in k[t]$ is a polynomial
satisfying
$6n -5 \leq \deg~ a_{6n}(t)\leq 6n$ ($n \in {\mathbb Z}$, $n \geq 1$). 
We set $\ell = \deg~ a_{6n}$.
We denote this affine curve by $U_0$.

We also consider the affine curve 
\begin{equation}\label{2}
    Y^2 = \tilde{a}_{6n}(T)
\end{equation}
in the affine plane ${\mathbb A}^2$, obtained by the change of coordinates
$$
       Y = \frac{y}{t^{3n}}, t = \frac{1}{T}.
$$
Then
$$
        \tilde{a}_{6n}(T) = a_{6n}(t)/t^{6n}.
$$
We denote this affine curve by $U_1$, and denote
by $C$ the curve defined by (\ref{1}) and (\ref{2}).
Thus, $C$ is a (not necessarily non-singular) hyperelliptic curve 
equipped with a morphism $\pi : C \longrightarrow {\mathbf P}^1$.
We denote by $P_0$ the point on ${\mathbf P}^1$ defined by $t=0$.
We have $\dim {\rm H}^1(C, {\mathcal O}_C)= \lfloor \frac{\ell + 1}{2}\rfloor -1$.

In this section, we summarize some facts about the Frobenius map and the Cartier operator
on $C$ that will be used later.
If $C$ is non-singular, these facts are well known.
We set $g = \lfloor \frac{\ell + 1}{2}\rfloor -1$. 
Then, the virtual genus of the curve $C$ is $g$.
The collection ${\mathcal U} =\{U_0, U_1\}$ gives an affine open covering of $C$.
Using the \v{C}ech cohomology with respect to ${\mathcal U}$, a basis of
${\rm H}^1(C, {\mathcal O}_C)$ is given by
$$
  \frac{y}{t^i} \quad {\rm on}~ U_0\cap U_1 ~(1 \leq i \leq g).
$$
We denote by $\Omega$ the vector space over $k$ generated by the rational $1$-forms
$$
                \frac{t^jdt}{y}\quad (0 \leq j \leq g -1).
$$
If the genus of the nonsingular model $\tilde{C}$ of $C$ is equal to
$g$, then $\Omega$ is isomorphic to the space 
${\rm H}^0 (\tilde{C}, \Omega_{\tilde{C}})$ of regular $1$-forms.

To construct the pairing between ${\rm H}^1(C, {\mathcal O}_C)$ and $\Omega$,
we consider an exact sequence
$$
   0\longrightarrow {\mathcal O}_C \longrightarrow {\mathcal K}_C \longrightarrow 
  {\mathcal K}_C/{\mathcal O}_C \longrightarrow 0.
$$
Here ${\mathcal K}_C$ is the constant sheaf associated with the rational function field $k(C)$.
We have the long exact sequence
$$
   \longrightarrow k(C) \longrightarrow {\rm H}^0(C, {\mathcal K}_C/{\mathcal O}_C) \stackrel{\delta}{\longrightarrow} 
   {\rm H}^1(C, {\mathcal O}_C) \longrightarrow {\rm H}^1(C, {\mathcal K}_C).
$$
Since ${\mathcal K}_C$ is a constant sheaf, we have ${\rm H}^1(C, {\mathcal K}_C) = 0$,
and hence $\delta$ is surjective. Since $\frac{y}{t^i}$ is a rational function on $U_0$,
the \v{C}ech cocycle $\beta = (\frac{y}{t^i}, 0)$ 
($1 \leq i \leq \lfloor \frac{\ell + 1}{2}\rfloor -1$) with respect to 
the affine open covering ${\mathcal U}$ is mapped to $\frac{y}{t^i}$ by $\delta$.
Using these representatives, we define a pairing
$$
\begin{array}{ccc}
    {\rm H}^1(C, {\mathcal O}_C) \times \Omega & \longrightarrow & k \\
     (\delta ((\frac{y}{t^i}, 0)) , \frac{t^jdt}{y}) & \longmapsto & 
          \operatorname{Res}_{P_0}t^{j-i}dt.
\end{array}
$$
Here, $\operatorname{Res}_{P_0}$ denotes the residue at the point $P_0$. Since the Gram matrix
$(\langle \delta ((\frac{y}{t^i}, 0)), \frac{t^jdt}{y})\rangle)$ is the identity matrix,
this pairing is nondegenerate.
A direct calculation gives the following proposition.
\begin{prop}\label{adjoint}
The Cartier operator ${\mathcal C}$ on $\Omega$ is adjoint to the  Frobenius map $F$
on ${\rm H}^1(C, {\mathcal O}_C)$ with respect to the above pairing. That is,
$$
     \langle F(\alpha), \omega \rangle = \langle \alpha, {\mathcal C}(\omega) \rangle^p
$$
for $\alpha \in {\rm H}^1(C, {\mathcal O}_C)$ and $\omega \in \Omega$.
\end{prop}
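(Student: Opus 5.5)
By sesquilinearity of both sides, it suffices to check the identity on basis elements $\alpha=\delta((y/t^i,0))$, $1\le i\le g$, and $\omega=t^j dt/y$, $0\le j\le g-1$. Both sides are additive in $\alpha$ and in $\omega$. Moreover $F(c\alpha)=c^pF(\alpha)$ and $\mathcal C(c^p\omega)=c\,\mathcal C(\omega)$. Hence for scalars $c$ the left side scales as $c^p$ in $\alpha$ and the right side also as $c^p$. In $\omega$, the left side is linear, while the right side $\langle\alpha,\mathcal C(c\omega)\rangle^p=\langle\alpha,c^{1/p}\mathcal C\omega\rangle^p$ is linear as well. So the whole proposition reduces to computing $F$ and $\mathcal C$ explicitly on the bases and comparing the resulting matrices.

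\emph{Frobenius.} We have $F(\alpha)$ represented by the cocycle $(y^p/t^{ip},0)$. Write $y^p=y\cdot y^{p-1}=y\,a_{6n}(t)^{(p-1)/2}$ and expand $a_{6n}(t)^{(p-1)/2}=\sum_m c_m t^m$, so the representative is $\sum_m c_m\, y\,t^{m-ip}$. The key step is to reduce this modulo coboundaries, i.e.\ modulo $\Gamma(U_0,\mathcal O)+\Gamma(U_1,\mathcal O)$, to the basis $y/t^{k}$, $1\le k\le g$. Terms with $m-ip\ge 0$ are regular on $U_0$. For terms $y/t^{k}$ with $k\ge g+1$, I will check that they are regular on $U_1$. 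Indeed $y/t^k=Y t^{3n-k}$ with $t=1/T$, and because of the degree bound $6n-5\le\ell\le 6n$ and the definition of $g$, these lie in $k[T,Y]$ (possibly after an $\mathcal O$-adjustment when $\ell$ is odd, where $Y$ has the appropriate vanishing at $T=0$). This gives $F(\alpha_i)=\sum_k c_{ip-k}\,\alpha_k$.

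\emph{Cartier operator.} Write $t^jdt/y=t^j y^{p-1}dt/y^p=\sum_m c_m t^{j+m}dt/y^p$. Then $\mathcal C(t^j dt/y)=\sum_m c_m^{1/p}\,\mathcal C(t^{j+m}dt)/y=\sum_{s} c_{sp+p-1-j}^{1/p}\, t^{s}dt/y$, using $\mathcal C(t^{r}dt)=t^{(r+1)/p-1}dt$ if $p\mid r+1$ and $0$ otherwise. One should verify that only $0\le s\le g-1$ occur. This follows from the degree bound on $a_{6n}$, but it is in any case harmless: the pairing only sees the coefficient of $t^{i-1}$ via the residue.

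\emph{Comparison.} Since the Gram matrix is the identity, $\langle F\alpha_i,\omega_j\rangle$ is the coefficient of $\alpha_{j+1}$ in $F\alpha_i$, namely $c_{ip-j-1}$. On the other side, $\langle\alpha_i,\mathcal C\omega_j\rangle$ is the coefficient of $t^{i-1}dt/y$ in $\mathcal C\omega_j$, namely $c_{(i-1)p+p-1-j}^{1/p}=c_{ip-j-1}^{1/p}$. Raising the latter to the $p$-th power gives equality. A cleaner way to see the same thing, which I may present instead, is to use the residue directly: $\operatorname{Res}_{P_0}(y^p t^{-ip}\cdot t^jdt/y)=\operatorname{Res}_{P_0}(y^{p-1}t^{j-ip}dt)$, together with the standard identity $\operatorname{Res}(\eta)=\operatorname{Res}(\mathcal C\eta)^p$ applied to $\eta=f^p\omega$, where $\mathcal C(f^p\omega)=f\,\mathcal C\omega$. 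This second formulation has the advantage that it bypasses the coboundary reduction entirely, provided the pairing is interpreted as a residue of the product (the cocycle times the form) at $P_0$.

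The main obstacle is justifying this interpretation for the singular curve $C$, together with the boundary step showing that $y/t^k$ for $k>g$ and $k\le 0$ are coboundaries. I would handle both by the explicit check on $U_1$ using $\ell\ge 6n-5$, treating $\ell$ even and odd separately.
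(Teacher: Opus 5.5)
Your strategy is the paper's ``direct calculation'', and your matrix entries are correct: $\langle F\alpha_i,\omega_j\rangle=c_{ip-j-1}$ and $\langle\alpha_i,\mathcal C\omega_j\rangle=c_{ip-j-1}^{1/p}$. The gap is in the step you single out as the main obstacle: the check you propose fails. With $U_1$ as written, $Y=y/t^{3n}$, the ring $\Gamma(U_1,\mathcal O_C)=k[T,Y]/(Y^2-\tilde a_{6n}(T))$ is free over $k[T]$ on $1,Y$. So $y/t^k=YT^{k-3n}$ is regular on $U_1$ only when $k\ge 3n$. If $\ell\le 6n-2$, then $g\le 3n-2$, and for $g+1\le k\le 3n-1$ the function $YT^{k-3n}$ lies outside this ring. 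It is merely integral over it, since $\tilde a_{6n}(T)/T^2$ is a polynomial. No parity-dependent adjustment changes this, and $\ell\ge 6n-5$ plays no role. Indeed, the \v{C}ech computation for this covering gives the basis $y/t^k$, $1\le k\le 3n-1$, of $\mathrm H^1(C,\mathcal O_C)$, which has dimension $3n-1>g$. Already in the K3 case of Section 5 ($\ell=10$, $n=2$) this dimension is $5$, not $4$. Moreover, $F(\alpha_i)$ in general has nonzero components along $y/t^k$ with $g<k<3n$. So your argument does not justify the reduction $F(\alpha_i)=\sum_{k\le g}c_{ip-k}\alpha_k$, and for the curve as literally glued that reduction is false.

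The repair is to use the chart at infinity that the count $g=\lfloor(\ell+1)/2\rfloor-1$ tacitly presupposes: $Y=y/t^{g+1}$, $T=1/t$. Then $Y^2=T^{2g+2}a_{6n}(1/T)$ is a polynomial because $\ell\le 2g+2$, and $y/t^k=YT^{k-g-1}\in\Gamma(U_1,\mathcal O_C)$ for every $k\ge g+1$, with no case split. The same inequality $\ell\le 2g+2$, not the bounds involving $6n$, gives $\mathcal C(\Omega)\subset\Omega$. You do need that inclusion, since the pairing is only defined on $\Omega$.

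Your residue formulation is also sound, but it requires something different from what you planned to prove. It does not need $y/t^k$ ($k>g$) to be coboundaries. It needs $[\phi_0+\phi_1y]\mapsto\operatorname{Res}_{P_0}(\phi_1t^j\,dt)$, with $\phi_i\in k[t,t^{-1}]$, to vanish on $\Gamma(U_0,\mathcal O_C)+\Gamma(U_1,\mathcal O_C)$. This holds for either chart because $j\le g-1$. After that, $\operatorname{Res}(\eta)=\operatorname{Res}(\mathcal C\eta)^p$ on $\mathbf P^1$, applied to $\eta=a_{6n}^{(p-1)/2}t^{j-ip}\,dt$, finishes the proof. With the $3n$-chart, however, this pairing is degenerate on $\mathrm H^1$. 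The nondegeneracy needed afterwards for $\ker(F-\mathrm{id})\cong\ker(\mathcal C-\mathrm{id})$ therefore still forces the $(g+1)$-chart.
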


Using this proposition, we obtain $\Ker (F - {\rm id}) \cong \Ker ({\mathcal C} - {\rm id})$.
Therefore, we use $\Omega$ in place of ${\rm H}^1(C, {\mathcal O}_C)$ 
in order to study Dolgachev's mysterious relationship.

\section{Mysterious relationship}         
In this section, we use the notaion introduced in the previous section.
Let $f : S \longrightarrow {\mathbf P}^1$ be a relatively minimal quasi-elliptic 
surface with a section which is biratinally equivalent to the affine surface
defined by the equation (\ref{equation}). By a suitable change of coordinates
we may assume
\begin{equation}\label{standard}
    a_{6n}(t) = b_1(t)^3 t + b_2(t)^3 t^2
\end{equation}
with some polynomials $b_i(t) \in k[t]$ ($i= 1, 2$).
Moreover, we can assume, by a suitable change of coordinate,
the following two conditions (cf. Miyanishi \cite[Theorem 1]{M}, 
Ito \cite[Theorem 1.4]{I}).

\noindent
Assumption $(*)$

(1) For every root $\alpha$ of $a'_{6n}(t) =0$, $v_{\alpha}(a_{6n}(t)-a_{6n}(\alpha))\leq 5$,
where $v_{\alpha}$ is the $(t -\alpha)$-adic valuation of $k[t]$ with $v_{\alpha}(t -\alpha)=1$.

(2) If, moreover, $a_{6n}(t)-a_{6n}(\alpha) = a(t-\alpha)^3 +$ (terms of higher degree in $t - \alpha$)
for some root $\alpha$ of $a'_{6n}(t) =0$ and $a \in k^*$, then 
$v_{\alpha}(a_{6n}(t)-a_{6n}(\alpha) - a(t-\alpha)^3)\leq 5$.

\noindent
Here, we write the derivative $\frac{d}{dt}a_{6n}(t)$ as $a'_{6n}(t)$.

We examine the Cartier operator ${\mathcal C}$ on the curve $C$ defined by (\ref{1}) and (\ref{2}).
We can write
$$
        t^ia_{6n}(t) = h_0(t)^3 + h_1(t)^3t + h_2(t)^3t^2
$$
with some polynomials $h_j(t) \in k[t]$ ($j= 0, 1, 2$).
Using this expression, the Cartier operator ${\mathcal C}$ is given by
\begin{equation}\label{cartier}
   {\mathcal C}(\frac{t^idt}{y}) = {\mathcal C}(\frac{t^iy^2dt}{y^3})
   ={\mathcal C}(\frac{t^ia_{6n}(t)dt}{y^3}) = \frac{h_2(t)dt}{y}.
\end{equation}
Let $U$ be the vector space of polynomials in $k[x]$ of degree less than or equal to $g -1$.
For $f(t) \in U$, we can write
$$
   f(t)a_{6n}(t) =f_0(t)^3 + f_1(t)^3t + f_2(t)^3 t^2
$$
with some polynomials $f_i(t) \in k[t]$ ($i= 0, 1, 2$).
Then, considering (\ref{cartier}), the Cartier operator is given by the $p^{-1}$-linear
homorphism
$$
\begin{array}{rccc}
  \tilde{{\mathcal C}} :& U & \longrightarrow & U\\
         & f & \longmapsto & f_2(t)
\end{array}
$$
and we have
$$
   \ker ({\mathcal C} - {\rm id}_{\Omega}) \cong \ker (\tilde{{\mathcal C}} - {\rm id}_U).
$$
We set 
$$     
H = \ker (\tilde{{\mathcal C}} - {\rm id}_U).
$$
$H$ is a vector space over ${\mathbb F}_3$, and $\dim_{{\mathbb F}_3} H$ equals
the stable rank of the Frobenius map on ${\rm H}^1(C, {\mathcal O}_C)$
(Dolgachev \cite[Lemma6.10]{D}).
We study the relationship between $H$ and ${\rm MW}(S)$.

For $p(t) \in H$, we write
$$
      p(t) = p_0(t)^3 + p_1(t)^3t + p_2(t)^3 t^2
$$
with some polynomials $p_i(t) \in k[t]$ ($i= 0, 1, 2$).
Since
$$
\begin{array}{l}
p(t)a_{6n}(t) \\
= (p_1(t)b_2(t)t + p_2(t)b_1(t)t)^3 + (p_0(t)b_1(t) + p_2(t)b_2(t)t)^3t \\
\quad +(p_0(t)b_2(t) + p_1(t)b_1(t))^3t^2,
\end{array}
$$     
we have 
\begin{equation}\label{l}
\tilde{{\mathcal C}}(p(t))= p_0(t)b_2(t) + p_1(t)b_1(t) 
 = p_0(t)^3 + p_1(t)^3t + p_2(t)^3 t^2 =p(t)
\end{equation}
by $p(t) \in H$.  
We set
$$
   H_{0} = \{ p(t) \in H\mid p_2(t) \equiv 0\}.
$$
Then $H_0$ is a subspace of $H$ over ${\mathbb F}_3$. 

Take $p(t)= p_0(t)^3 + p_1(t)^3t \in H_0$. Then, by (\ref{l}), $p(t) = p_0(t)b_2(t) + p_1(t)b_1(t)$.
Set $v = \frac{p(t)}{a'_{6n}(t)}$.
Then, 
$$
v - a_{6n}(t)v^3 = \frac{p(t)a'_{6n}(t)^2 -a_{6n}(t)p(t)^3}{a'_{6n}(t)^3}.
$$
We have 
$$
\begin{array}{l}
p(t)a'_{6n}(t)^2 -a_{6n}(t)p(t)^3\\
=(p_0(t)^3 + p_1(t)^3t)(b_1(t)^3 -b_2(t)^3t)^2 
- (b_1(t)^3t + b_2(t)^3t^2)(p_0(t)b_2(t) + p_1(t)b_1(t))^3\\
= (p_0(t)b_1(t)^2 + p_1(t)b_2(t)^2t)^3.
\end{array}
$$
Therefore, 
$$
(u, v) = (\frac{p_0(t)b_1(t)^2 + p_1(t)b_2(t)^2t}{a'_{6n}(t)}, \frac{p(t)}{a'_{6n}(t)})
$$
gives a rational point on the curve defined by (\ref{equation}) over $k(t)$.  Hence, it
gives an element of ${\rm MW}(S)$.
We consider a homomorphism
$$
\begin{array}{rccc}
\varphi :& H_0  & \longrightarrow & {\rm MW}(S) \\
       & p(t)   & \longmapsto & 
       (u, v) = (\frac{p_0(t)b_1(t)^2 + p_1(t)b_2(t)^2t}{a'_{6n}(t)}, \frac{p(t)}{a'_{6n}(t)}).
\end{array}    
$$

\begin{theorem}\label{main} 
$\varphi$ is an isomorphism.
\end{theorem}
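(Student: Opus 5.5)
Note first that the generic fibre $G$ of (\ref{equation}), $u^3 = v - a_{6n}(t)v^3$, is the kernel of the map $(u,v)\mapsto u^3 - v + a_{6n}(t)v^3$, which is additive in characteristic $3$. So $G$ is a subgroup of ${\mathbb G}_a^2$ over $k(t)$, and the group law on ${\rm MW}(S)=G(k(t))$ is coordinatewise addition. The plan is to check, in order, that $\varphi$ is a homomorphism, that it is injective, and that it is surjective.

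\emph{Homomorphism and injectivity.} The map $p\mapsto (p_0,p_1,p_2)$ is additive, because cube roots are additive in characteristic $3$. Hence $p\mapsto p_0b_1^2+p_1b_2^2t$ and $p\mapsto p$ are additive on $H_0$, and with the coordinatewise group law this makes $\varphi$ a homomorphism. The earlier computation shows that $\varphi(p)$ lies on (\ref{equation}). For injectivity, $v=p/a'_{6n}$ determines $p$; note that $a'_{6n}=b_1^3-b_2^3t\neq 0$, since otherwise $a_{6n}$ would be a cube and the surface would not be quasi-elliptic.

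\emph{Surjectivity: algebraic part.} Let $(u,v)\in k(t)^2$ be a point of $G$, and put $p:=va'_{6n}$ and $q:=ua'_{6n}$. Multiplying (\ref{equation}) by $a'^3_{6n}$ gives
$$q^3 = p\,a'^2_{6n} - a_{6n}p^3 .$$
Suppose we already know that $p$ is a polynomial of degree $\le g-1$. Then $q^3$ is a polynomial, so $q$ is one too. Write $p=p_0^3+p_1^3t+p_2^3t^2$, and expand the right-hand side using $a'_{6n}=b_1^3-b_2^3t$ and $a_{6n}=b_1^3t+b_2^3t^2$. The expansion of $a_{6n}p^3$ was already carried out in the paper, and that of $p\,a'^2_{6n}$ is analogous. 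Comparing the components in the decomposition $k[t]=k[t]^3\oplus k[t]^3t\oplus k[t]^3t^2$, the $t$- and $t^2$-components of the right-hand side must vanish. I expect these two conditions to give $p_2=0$ and $p=p_0b_2+p_1b_1$, i.e.\ $\tilde{\mathcal C}(p)=p$ with $p_2=0$, so that $p\in H_0$. Then $\varphi(p)=(u,v)$, because $u$ is recovered from $v$ as the unique cube root.

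\emph{Surjectivity: the main obstacle.} The hard step is proving that $p=va'_{6n}$ is a polynomial of degree $\le g-1$. I would argue locally on ${\mathbf P}^1$, using that a section of the relatively minimal smooth model $S$ meets each fibre in a smooth point.
\begin{itemize}
\item At a point $t=\alpha$ with $a'_{6n}(\alpha)\neq 0$, the fibre is a cuspidal curve that is smooth away from the cusp. After translating $a_{6n}$ by $a_{6n}(\alpha)$, one checks that $v$ is regular there.
\item At a root $\alpha$ of $a'_{6n}$, Assumption $(*)$ bounds $v_\alpha(a_{6n}-a_{6n}(\alpha))$ (and its refinement). I would compute which pole orders of $v$ are compatible with (\ref{equation}) together with integrality of the section on the minimal model. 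The aim is to show $v_\alpha(v)\ge -v_\alpha(a'_{6n})$.
\item At $t=\infty$, the same analysis in the coordinates $T$, $\tilde a_{6n}$ should give the degree bound $\deg p\le g-1$.
\end{itemize}
I would first try to obtain the local bounds directly from the valuation of $u^3=v(1-a_{6n}v^2)$. If that fails, I would use the relation $v'=a'_{6n}v^3$, obtained by differentiating (\ref{equation}); it shows that $a'_{6n}$ controls the poles of $v$.
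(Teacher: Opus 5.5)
Your homomorphism and injectivity argument, and your component comparison, agree with the paper. The paper actually carries out the expansion you only anticipate: $b_2\cdot(2)-b_1\cdot(3)$ applied to the $t$- and $t^2$-components gives $(b_2^3t-b_1^3)p_2=0$, hence $p_2=0$, and then either equation gives $p=b_1p_1+b_2p_0$.

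The genuine gap is the step you yourself call the main obstacle: showing that $p=va'_{6n}$ is a polynomial of bounded degree. What you give there is a programme (``one checks'', ``I would compute'', ``should give''), not an argument. Your primary route is also much harder than necessary: local analysis on the minimal model, using that sections meet fibres in smooth points, plus Assumption $(*)$ at the roots of $a'_{6n}$. At a root of $a'_{6n}$ a section can pass through the singular point of the Weierstrass fibre, so $v$ really does have poles there. For example, $a_{12}=t^{10}+t^2$, $p=t^4$ gives $v=t^3/(t^8-1)$. Bounding those pole orders geometrically would mean following the resolution of the surface singularities, which you have not done.

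The missing idea is the one you list only as a fallback, and it settles everything globally in two lines. It uses no geometry of $S$ and no Assumption $(*)$. Differentiating \eqref{equation} gives $v'=a'_{6n}v^3$. Writing $v=g/f$ in lowest terms, this reads $f\,(g'f-gf')=a'_{6n}g^3$, and $\gcd(f,g)=1$ forces $f\mid a'_{6n}$, i.e.\ $p\in k[t]$. Your identity $(ua'_{6n})^3=pa'^2_{6n}-a_{6n}p^3$ then gives $ua'_{6n}\in k[t]$.

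No analysis at $t=\infty$ is needed either, and you should not assume the degree bound before the algebraic step, since the component comparison uses only polynomiality. Once it yields $p=p_0^3+p_1^3t=b_1p_1+b_2p_0$, put $m=\deg\, p$. The inequalities $3\deg\,p_0\le m$, $3\deg\,p_1+1\le m$, $3\deg\,b_1+1\le\ell$ and $3\deg\,b_2+2\le\ell$ give $3m\le\ell+m-2$, i.e.\ $m\le(\ell-2)/2$. This is how the paper places $p$ in $U$.

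One caution about the bound itself. For even $\ell$, $(\ell-2)/2$ equals $g$, not $g-1$, with the paper's formula for $g$. The example above has $\ell=10$, $g=4$ and $p=t^4\in H_0$. So your stated target $\deg\, p\le g-1$ is false as written, and the paper's final inequality $\ell/2-1\le g-1$ has the same off-by-one problem. The right ambient space is polynomials of degree $\le 3n-2$, matching $\dim{\rm H}^1(C,\mathcal O_C)=3n-1$ for the given gluing. The bound $(\ell-2)/2\le 3n-2$ then holds because $\ell\not\equiv 0\pmod 3$.
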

\begin{proof}
$\varphi$ is clearly an injective homomorphism.
It remains to prove that $\varphi$ is surjective.

Take $(u, v) \in {\rm MW}(S)$. Then there exist rational functions $\theta(t), \eta (t) \in k(t)$
such that $u = \theta (t)$ and $v = \eta (t)$.  If $\eta (t) \equiv 0$, then $\theta (t)$ is also 0
and we can take $p(t) = 0$. Therefore, we assume that $\eta (t) \not\equiv 0$.
If We can write
$$
     \theta (t) = \frac{r(t)}{q(t)}, \quad \eta (t) = \frac{g(t)}{f(t)}
$$
with $q(t), r(t), f(t), g(t) \in k[t]$. We may assume that $q(t)$ and $r(t)$ (resp. $f(t)$ and
$g(t)$) are relatively prime. We have
\begin{equation}\label{solution}
   \theta (t)^3 = \frac{g(t)}{f(t)} -a_{6n}(t)\left(\frac{g(t)}{f(t)}\right)^3.
\end{equation}
Differentiating both sides with resopect to $t$, we have an equation
$$
(g'(t)f(t) -g(t)f'(t))f(t) = a'_{6n}(t)g(t)^3.
$$
Since $f(t)$ and $g(t)$ are relatively prime, $f(t)$ divides $a'_{6n}(t)$.
Therefore, there exists a polynomial $p(t) \in k[t]$ such that
$$
    \eta (t) =\frac{p(t)}{a'_{6n}(t)}.
$$
Putting this expression into (\ref{solution}), we have an equation
$$
    a'_{6n}(t)^3r(t)^3 =(p(t)a'_{6n}(t)^2 - a_{6n}(t)p(t)^3)q(t)^3.
$$
Since $q(t)$ and $r(t)$ are relatively prime, it follows that 
$q(t)$ divides $a'_{6n}(t)$.
Hence, there exists a polynomial $h(t) \in k[t]$ such that 
$$
   \theta (t) = \frac{h(t)}{a'_{6n}(t)}.
$$
(It includes the case $h(t)= 0$.) 
Substituting these results into (\ref{equation}), we obtain an equation
\begin{equation}\label{h(t)}
   h(t)^3 = a'_{6n}(t)^2p(t) - a_{6n}(t)p(t)^3.
\end{equation}
Let $p(t) = p_0(t)^3 + p_1(t)^3t + p_2(t)^3 t^2$ where
$p_0(t), p_1(t), p_2(t) \in k[t]$.
Then, by (\ref{h(t)}) we have
$$
\begin{array}{rl}
h(t)^3 =& (b_1(t)^2p_0(t) + b_1(t)b_2(t)p_2(t)t + b_2(t)^2p_1(t)t)^3\\
     & + (b_1(t)^2p_1(t) + b_2(t)^2p_2(t)t + b_1(t)b_2(t)p_0(t)- b_1(t)p(t))^3t\\
     & +(b_1(t)^2p_2(t) + b_2(t)^2p_0(t) + b_1(t)b_2(t)p_1(t) -b_2(t)p(t))^3t^2.
\end{array}
$$
Since $\operatorname{char} = 3$, we have $k[t] =k[t^3] \oplus k[t^3]t \oplus k[t^3]t^2$.
In particular, every polynomial in $k[t]$ can be written uniquely in the form
$$
f_0(t)^3 + f_1(t)^3t + f_2(t)^3t^2\qquad (f_0(t), f_1(t), f_2(t)\in k[t]).
$$
Therefore, comparing the components corresponding to the powers of $t$ modulo $3$,
we obtain
\begin{equation}\label{3equations}
\begin{array}{l}
     (1) \quad h(t) =b_1(t)^2p_0(t) + b_1(t)b_2(t)p_2(t)t + b_2(t)^2p_1(t)t\\
     (2) \quad b_1(t)^2p_1(t) + b_2(t)^2p_2(t)t + b_1(t)b_2(t)p_0(t)- b_1(t)p(t)=0\\
     (3) \quad b_1(t)^2p_2(t) + b_2(t)^2p_0(t) + b_1(t)b_2(t)p_1(t) -b_2(t)p(t) = 0
\end{array}
\end{equation}
Taking $(2) \times b_2(t) - (3) \times b_1(t)$, we obtain
$$
      (b_2(t)^3 t - b_1(t)^3)p_2(t) = 0.
$$
Since $a_{6n}(t)$ is not identically zero, $b_2(t)^3 t - b_1(t)^3$ is not identically zero.
Hence, $p_2(t) \equiv 0$.    
Therefore, by $(1)$ we obtain
$$
       h(t) = b_1(t)^2p_0(t) + b_2(t)^2p_1(t)t.
$$ 
Therefore, we conclude that
$$
(u, v) = (\frac{p_0(t)b_1(t)^2 + p_1(t)b_2(t)^2t}{a'_{6n}(t)}, \frac{p_0(t)^3 + p_1(t)^3t}{a'_{6n}(t)}).
$$
Since at least one of $b_1(t) \not\equiv 0$ and $b_2(t) \not\equiv 0$ holds, (2) or (3) gives
\begin{equation}\label{p(t)}
 p(t) = p_0(t)^3 + p_1(t)^3t = b_1(t)p_1(t) + b_2(t)p_0(t).
\end{equation}
It remains to show $p(t) = p_0(t)^3 + p_1(t)^3t \in H_0$.   
By \eqref{p(t)} and the definition of $\tilde{{\mathcal C}}$, we have
$$
\tilde{\mathcal C} (p(t))= \tilde{\mathcal C} (p_0(t)^3 + p_1(t)^3t) = b_1(t)p_1(t) + b_2(t)p_0(t) = p(t).
$$
Thus, it remains only to show that $\deg~ p(t) \leq g - 1$.
Set $\deg~ p(t) = m$. Since $\deg~p_0(t)^3 \neq \deg~(p_1(t)^3t)$, we have $3 \deg ~p_0 \leq m$
and $3 \deg ~p_1 + 1 \leq m$. Similarily, since  $a_{6n}(t) = b_1(t)^3t + b_2(t)^3t^2$, 
we have $3 \deg~ b_1(t) + 1 \leq \ell$ and $3\deg~b_2(t) + 2 \leq \ell$.
By \eqref{p(t)}, we have
$$
\begin{array}{rl}
  m = \deg~p(t) & \leq \max \{\deg~ b_1(t) + \deg~p_1(t), \deg~ b_2(t) + \deg~p_0(t)\} \\
       & \leq \max \{\frac{\ell -1}{3} + \frac{m -1}{3}, \frac{\ell -2}{3} + \frac{m}{3}\}\\
       & = \frac{\ell + m -2}{3}. 
\end{array}
$$
Therefore, we have $m \leq \frac{\ell}{2} -1 \leq g -1$ as desired.      
\end{proof}

\begin{corollary} 
If $f : S \longrightarrow {\mathbf P}^1$ is a rational quasi-elliptic surface,
then we have $H = H_0  \cong {\rm MW}(S)$.
\end{corollary}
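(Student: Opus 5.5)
The plan is to combine Theorem \ref{main}, which gives $H_0 \cong {\rm MW}(S)$ via $\varphi$, with a degree count showing that the inclusion $H_0 \subset H$ is an equality when $S$ is rational. Since $H_0 \subset H$ holds by definition, it suffices to show that every $p(t) \in H$ has $p_2(t) \equiv 0$. I expect this to follow from the bound $\deg p(t) \leq g-1$ built into the definition of $U$, together with the smallness of $g$ in the rational case.

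First I pin down the rational case in terms of $n$. For the relatively minimal quasi-elliptic surface $f : S \to {\mathbf P}^1$ attached to $u^3 = v - a_{6n}(t)v^3$, with $6n-5 \leq \deg a_{6n} \leq 6n$ and Assumption $(*)$ in force, the Weierstrass data $y^2 = x^3 + a_{6n}(t)$ is minimal. Hence $\chi({\mathcal O}_S) = n$ and the canonical bundle formula gives $K_S = f^*{\mathcal O}_{{\mathbf P}^1}(n-2)$. I will use the standard facts (Miyanishi, Ito, \cite{CDL}) that a relatively minimal quasi-elliptic surface with a section over ${\mathbf P}^1$ is rational exactly when $\chi({\mathcal O}_S) = 1$. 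For $n \geq 2$, $K_S$ is trivial or has a positive multiple with sections, so $S$ is not rational. Thus rationality of $S$ forces $n = 1$, that is, $\ell = \deg a_{6n} \leq 6$.

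Next I bound $g$. Since $\ell \leq 6$, we get $g = \lfloor \frac{\ell+1}{2} \rfloor - 1 \leq \lfloor 7/2 \rfloor - 1 = 2$. So $U$ consists of polynomials of degree at most $g - 1 \leq 1$. Any $p(t) \in H \subset U$ therefore has the form $p(t) = c_0 + c_1 t$ with $c_0, c_1 \in k$. In the decomposition $k[t] = k[t^3] \oplus k[t^3]t \oplus k[t^3]t^2$, this means $p_0 = c_0^{1/3}$, $p_1 = c_1^{1/3}$ and $p_2 \equiv 0$. Hence $p(t) \in H_0$, so $H = H_0$, and Theorem \ref{main} yields $H = H_0 \cong {\rm MW}(S)$.

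The main obstacle is the first step: justifying that, under the normalization (\ref{standard}) and Assumption $(*)$, rationality of $S$ is equivalent to $n = 1$. Here I would rely on the cited classification rather than reprove it. The point to check is that Assumption $(*)$ is precisely the minimality condition that prevents lowering $n$ by a coordinate change, so that $\chi({\mathcal O}_S) = n$. Once this identification is granted, the remaining argument is the elementary degree count above.
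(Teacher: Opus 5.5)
Your proposal is correct and follows essentially the paper's argument. Rationality bounds $\ell=\deg a_{6n}$, hence $g\le 2$, so every $p\in H$ has $\deg p\le 1$, $p_2\equiv 0$, $H=H_0$, and Theorem \ref{main} gives the isomorphism. The paper simply cites Miyanishi/Ito for $\ell\le 5$; you derive $\ell\le 6$ from $\chi(\mathcal{O}_S)=n=1$ and the canonical bundle formula, which is equivalent because the normal form $b_1^3t+b_2^3t^2$ has no $t^6$ term.
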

\begin{proof}
Since $S$ is rational, Assumption $(*)$ implies that $\deg ~{a_{6n}(t)}\leq 5$ 
(cf. Miyanishi \cite[Theorem 1]{M}, Ito \cite[Theorem 1.4]{I}).
Hence, the virtual genus of the curve $C$ is less than or equal to $2$. It follows that,
for every $p(t) \in H$, we have $\deg~ p(t) \leq 1$. Consequently, $p_2(t) = 0$,
and thus $H = H_0$.
\end{proof}

This corollary shows that Dolgachev's mysterious conjecture holds if the quasi-elliptic surface
$S$ is rational.

\section{Quasi-elliptic K3 surfaces with 10 singular fibers of type $IV$}
In this section we use the notation of the previous sections.
Let $f: S \longrightarrow {\mathbb P}^1$ be a quasi-elliptic K3 surface
with 10 singular fibers of type ${\rm IV}$. Then, by Ito \cite[Lemma 4.2 and Theorem 4.3]{I}
we may assume 
$$
   a_{6n}(t) = t^{10} + a_8^3t^8 + a_7^3t^7 + a_5^3t^5 + a_4^3t^4 + t^2\quad\quad 
   (a_4, a_5, a_7, a_8 \in k)
$$
and that $a'_{6n}(t) = 0$ has only simple roots. Here, we use $a_i^3$ in place of
$a_i$ for covenience in the calculation below; this does not cause any loss of generality.
In this case, the virtual genus of the curve $C$ is $4$. For $p(t) \in H$,
we have $\deg~ p(t) \leq 4$, and hence $p(t)$ can be written as
$$
 p(t) = (c_0 + c_1t)^3 + (c_2 + c_3t)^3t + c_4^3t^2\qquad (c_i \in k, i=0, 1, \ldots, 4).
$$
Since 
$$
a_{6n}(t) = (t^3 + a_7t^2 + a_4t)^3t + (a_8t^2 + a_5t + 1)^3t^2,
$$
we have
$$
\tilde{\mathcal C}(p(t)) = 
(c_0 + c_1t)(a_8t^2 + a_5t + 1) +(c_2 + c_3t)(t^3 + a_7t^2 + a_4t).
$$
Since $p(t) \in H$, we have $\tilde{\mathcal C}(p(t)) = p(t)$. Comparing coefficients
on both sides, we obtain
\begin{equation}\label{5equations}
\left\{
\begin{array}{l}
    c_3 = c_3^3\\
    a_8c_1 + a_7c_3 + c_2 = c_1^3\\
    a_5c_1 + a_8c_0 + a_4c_3 + a_7c_2 = c_4^3\\
    c_1 + a_5c_0 + a_4c_2 = c_2^3\\
    c_0 = c_0^3.
\end{array}
\right.
\end{equation}
Therefore, $c_0, c_3 \in {\mathbb F}_3$. Writing $c_0 = \alpha$ and $c_3 = \beta$ 
for some $\alpha, \beta \in {\mathbb F}_3$, the above equations become
\begin{equation}\label{3}
\left\{
\begin{array}{l}
    (1) \qquad c_2 = c_1^3 - a_8c_1 - a_7\beta \\
    (2) \qquad c_1 = c_2^3 - a_4c_2 - a_5\alpha  \\
    (3) \qquad c_4^3 = a_5c_1 + a_7c_2 + a_8\alpha  + a_4\beta \\
\end{array}
\right.
\end{equation}
Substituting $(2)$ into $(1)$, we have the equation
\begin{equation}\label{c_2}
  c_2^9 -(a_4^3 + a_8)c_2^3 + (a_4a_8 -1)c_2 - a_5^3\alpha  + a_8a_5\alpha  -a_7\beta  = 0.
\end{equation}
If $a_4a_8 \neq 1$, then this equation has 9 distinct roots. 
If $a_4a_8 = 1$ and $a_4^3 + a_8 \neq 0$,
then it has 3 distinct roots. Finally, if $a_4a_8 = 1$ and $a_4^3 + a_8 = 0$, then
this equation has a unique root.
For each solution $c_2$, equations $(2)$ and $(3)$ uniquely determine
$c_1$ and $c_4$, respectively. Thus, we obtain the following result concerning
the stable rank $s = \dim_{{\mathbb F}_3}H$ of the Frobenius map
on ${\rm H}^1(C, {\mathcal O}_C)$.

\begin{theorem}\label{H}
$s = \dim_{{\mathbb F}_3} H \leq 4$. More precisely,
\begin{itemize}
\item[$({\rm i})$] If $a_4a_8 \neq 1$, $s = 4$.
\item[$({\rm ii})$] If $a_4a_8 = 1$ and $a_4^3 + a_8 \neq 0$, $s = 3$.
\item[$({\rm iii})$] If $a_4a_8 = 1$ and $a_4^3 + a_8 = 0$, $s = 2$.
\end{itemize}
\end{theorem}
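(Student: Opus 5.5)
The plan is to count the elements of $H$ directly and take $\log_3$; everything reduces to counting the roots of the polynomial \eqref{c_2}. Since $H$ is an ${\mathbb F}_3$-vector space, it suffices to show $|H| = 3^s$ with $s$ as stated.

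First I will check that the map $p(t)\mapsto (c_0,\dots,c_4)$ is a bijection between $H$ and the solutions of \eqref{5equations}. This uses the unique decomposition $k[t]=k[t^3]\oplus k[t^3]t\oplus k[t^3]t^2$ and the bound $\deg p(t)\le g-1=3$ — the paper's ansatz allows degree $4$, but the $t^4$ coefficient $c_1^3$ is forced by the second equation, and the final count is unaffected either way. The solutions of \eqref{5equations} are exactly the tuples with $c_0=\alpha,\ c_3=\beta\in{\mathbb F}_3$ that satisfy \eqref{3}. For fixed $(\alpha,\beta)$, equation (2) determines $c_1$ from $c_2$, and (3) determines $c_4$ uniquely, since cube roots are unique in characteristic $3$. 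Substituting (2) into (1) shows that $c_2$ must be a root of \eqref{c_2}. Conversely, any root $c_2$ of \eqref{c_2} together with $c_1$ from (2) satisfies (1). Hence
$$|H|=\sum_{(\alpha,\beta)\in{\mathbb F}_3^2} N(\alpha,\beta),$$
where $N(\alpha,\beta)$ is the number of distinct roots in $k$ of \eqref{c_2}.

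Next, \eqref{c_2} has the form $L(c_2)=\gamma$, where
$$L(X)=X^9-(a_4^3+a_8)X^3+(a_4a_8-1)X$$
is an additive ($3$-)polynomial and $\gamma=a_5^3\alpha-a_8a_5\alpha+a_7\beta$. Because $k$ is algebraically closed, $L$ is surjective, so every equation $L(X)=\gamma$ has exactly $|\ker L|$ solutions in $k$. Indeed, it has a solution $x_0$, and all solutions are $x_0+\ker L$. Thus $N(\alpha,\beta)=|\ker L|$ for all nine pairs, and
$$|H| = 9\,|\ker L|.$$

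The step needing the most care is computing $|\ker L|$; I will use separability. We have $L'(X)=a_4a_8-1$.
\begin{itemize}
\item[(i)] If $a_4a_8\ne1$, then $L$ is separable of degree $9$, so $|\ker L|=9$ and $s=4$.
\item[(ii)] If $a_4a_8=1$, then $L(X)=M(X)^3$ with $M(X)=X^3-(a_4^3+a_8)^{1/3}X$. So $\ker L=\ker M$. When $a_4^3+a_8\ne0$, $M$ is separable of degree $3$, giving $|\ker L|=3$ and $s=3$.
\item[(iii)] When $a_4a_8=1$ and $a_4^3+a_8=0$, we get $L=X^9$, so $|\ker L|=1$ and $s=2$.
\end{itemize}
In each case $s\le 4$, which gives the stated bound.
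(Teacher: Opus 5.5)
Your counting is the same as the paper's. You reduce \eqref{5equations} to the degree-$9$ equation \eqref{c_2} in $c_2$, count its distinct roots ($9$, $3$ or $1$), and multiply by the nine choices of $(\alpha,\beta)$. Writing \eqref{c_2} as $L(c_2)=\gamma$ with $L$ additive is a good addition: it shows directly why the number of roots does not depend on $(\alpha,\beta)$. The paper leaves this implicit; there it follows from $L'=a_4a_8-1$ being constant, and from $L=M^3$ when $a_4a_8=1$.

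There is, however, a real error in your first paragraph, and it changes the answer.

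\begin{itemize}
\item The $t^4$-coefficient of the ansatz $(c_0+c_1t)^3+(c_2+c_3t)^3t+c_4^3t^2$ is $c_3^3$, not $c_1^3$; the latter is the $t^3$-coefficient. Comparing $t^4$-coefficients gives only $c_3=c_3^3$, i.e.\ $c_3=\beta\in{\mathbb F}_3$, not $c_3=0$.
\item So if you really take $H$ inside the polynomials of degree $\le g-1=3$, then $\beta=0$ is forced. Your map from $H$ to the solutions of \eqref{5equations} is then not surjective, and the count becomes $|H|=3\,|\ker L|$. That gives $s=3,2,1$ instead of $4,3,2$, so the count is not ``unaffected either way''.
\item Concretely, take $a_{12}(t)=t^{10}+t^2$, so $b_1=t^3$ and $b_2=1$. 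Then $t^4=(t)^3t$ and $\tilde{\mathcal C}(t^4)=t\cdot t^3=t^4$, so $t^4\in H$. Without it you would get $s=3$ in case (i), and $r\le 3$, contradicting Theorem \ref{r=4}.
\end{itemize}

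Your sum over all nine pairs $(\alpha,\beta)$ already uses degree-$4$ polynomials whenever $\beta\neq 0$. So your proof is valid exactly under the paper's convention $\deg p\le 4$: five unknowns $c_0,\dots,c_4$, matching the bound $m\le \ell/2-1=4$ from the proof of Theorem \ref{main}. You were right that $g-1=3$ clashes with the degree-$4$ ansatz. But the stated values of $s$ hold only on the space of polynomials of degree $\le 4$. Drop the claim that restricting to degree $\le 3$ is harmless, and state the domain as $\deg p\le 4$; with that fix the rest of your argument goes through.
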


\begin{corollary}
The Mordell--Weil torsion rank $r$ of the quasi-elliptic K3 surface $f : S \longrightarrow {\mathbf P}^1$
with 10 singular fibers of type ${\rm IV}$ is at most 4.
Moreover, the following hold:
\begin{itemize}
\item[$({\rm i})$] If $a_4a_8 = 1$, then $r \leq 3$.
\item[$({\rm ii})$] If $a_4a_8 = 1$ and $a_4^3 + a_8 = 0$, then $r \leq 2$.
\end{itemize}
\end{corollary}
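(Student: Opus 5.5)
The plan is to combine Theorem \ref{main} with Theorem \ref{H}. By Theorem \ref{main}, ${\rm MW}(S) \cong H_0$, and $H_0$ is by definition an ${\mathbb F}_3$-subspace of $H$. Hence
$$
r = \dim_{{\mathbb F}_3} {\rm MW}(S) = \dim_{{\mathbb F}_3} H_0 \leq \dim_{{\mathbb F}_3} H = s .
$$
Every claimed bound therefore follows from the corresponding bound on $s$ in Theorem \ref{H}. First, $s \leq 4$ in all cases gives $r \leq 4$. For (i), if $a_4a_8 = 1$, then we are in case (ii) or (iii) of Theorem \ref{H}, so $s \leq 3$ and hence $r \leq 3$. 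For (ii), if in addition $a_4^3 + a_8 = 0$, then case (iii) gives $s = 2$, so $r \leq 2$.

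One point must be checked before Theorem \ref{main} applies here. It requires $a_{6n}(t)$ to be in the form \eqref{standard} and to satisfy Assumption $(*)$. The normal form $a_{6n}(t) = t^{10} + a_8^3t^8 + a_7^3t^7 + a_5^3t^5 + a_4^3t^4 + t^2$ is taken from Ito, and the displayed decomposition
$$
a_{6n}(t) = (t^3 + a_7t^2 + a_4t)^3t + (a_8t^2 + a_5t + 1)^3t^2
$$
shows that it has the shape \eqref{standard}, with $b_1 = t^3 + a_7t^2 + a_4t$ and $b_2 = a_8t^2 + a_5t + 1$. Assumption $(*)$ holds because the normalization cited from Ito is exactly the minimal form of \cite{M}, \cite{I}. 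In particular, since $a'_{6n}(t) = 0$ has only simple roots, the relevant valuations stay small.

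The one step that needs some care is confirming that the subspace $H$ in Theorem \ref{H} is the same $H$ that contains $H_0$ in Section 4. It is: both are $\ker(\tilde{\mathcal C} - {\rm id}_U)$ with $U$ the polynomials of degree at most $g - 1 = 3$. There is no real obstacle, since the statement is a direct consequence of the two theorems. Equality $r = s$ is not claimed and would need the finer analysis of which solutions $p(t)$ have $c_4 = 0$; for the inequalities it is not needed.
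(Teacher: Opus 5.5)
Your argument is correct and is the paper's own proof: Theorem \ref{main} gives ${\rm MW}(S)\cong H_0$, the inclusion $H_0\subseteq H$ gives $r\le s$, and the three bounds then follow from Theorem \ref{H}. One correction to your side check: in Section 5 both $H_0$ and $H$ are taken in the polynomials of degree $\le 4$, not $\le g-1=3$. You can see this from the term $c_3^3t^4$ in the general $p(t)$ there, and from $t^4\in H_0$ in example (5); this is the range forced by the bound $\deg p\le \ell/2-1=4$ at the end of the proof of Theorem \ref{main}. Since both spaces are taken in that same ambient space, $H_0\subseteq H$ still holds and your deduction is unaffected.
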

\begin{proof}
This follows from Theorems \ref{main}, \ref{H}, togather with $H_0\subset H$.
\end{proof}

The bound $r \leq 4$ was also obtained by Ito \cite[Corollary 6.4]{I} by a different method.

The precise value $r$ is given by the following proposition.

\begin{theorem}\label{r}
The set of solutions $(c_0, c_1, c_2, c_3)$ of the system of equations 
$$
\left\{
\begin{array}{l}
    c_0 = c_0^3\\
    c_3 = c_3^3\\
    c_1 = c_2^3 - a_4c_2 - a_5c_0  \\
    c_2^9 -(a_4^3 + a_8)c_2^3 + (a_4a_8 -1)c_2 + (a_8a_5- a_5^3)c_0 - a_7c_3 = 0\\
    a_8c_0 + a_5c_1  + a_7c_2 + a_4c_3 =0\\
\end{array}
\right.
$$
form a finite abelian group isomorphic to $({\mathbb Z}/3{\mathbb Z})^r$, where $r$ is
the Mordell--Weil torsion rank of the quasi-elliptic K3 surface $f : S \longrightarrow {\mathbf P}^1$
with 10 singular fibers of type ${\rm IV}$.
\end{theorem}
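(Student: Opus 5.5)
By Theorem \ref{main}, ${\rm MW}(S)\cong H_0$, so the plan is to identify $H_0$ with the solution set of the displayed system. An element of $H_0$ is an element $p(t)\in H$ whose component $p_2(t)$ in the decomposition $k[t]=k[t^3]\oplus k[t^3]t\oplus k[t^3]t^2$ vanishes. In the present situation every $p(t)\in H$ has the form
$$
p(t) = (c_0 + c_1t)^3 + (c_2 + c_3t)^3t + c_4^3t^2 ,
$$
so $p_2(t)=c_4$. Hence $H_0=\{p\in H\mid c_4=0\}$.

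First, I will rewrite the membership condition $p\in H$ via the system (\ref{5equations}), or equivalently via (\ref{3}) and (\ref{c_2}) with $\alpha=c_0$ and $\beta=c_3$:
\begin{itemize}
\item $c_0^3=c_0$ and $c_3^3=c_3$;
\item $c_1=c_2^3-a_4c_2-a_5c_0$ (equation (2) of (\ref{3}));
\item $c_2$ satisfies (\ref{c_2}), which is the fourth equation of the statement;
\item $c_4^3=a_5c_1+a_7c_2+a_8c_0+a_4c_3$.
\end{itemize}
One point needs checking: (\ref{c_2}) was obtained by substituting (2) into (1) of (\ref{3}). Given (2), equation (1) is therefore equivalent to (\ref{c_2}). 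So (1) and (2) together are equivalent to (2) together with (\ref{c_2}), and equation (1) may be dropped. The sign conventions match, since $-a_5^3\alpha+a_8a_5\alpha=(a_8a_5-a_5^3)c_0$.

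Imposing $c_4=0$ turns the last equation into $a_8c_0+a_5c_1+a_7c_2+a_4c_3=0$. Thus the map $p\mapsto(c_0,c_1,c_2,c_3)$ is a bijection from $H_0$ onto the solution set of the system in the statement. It is injective because $p$ is determined by $c_0,\dots,c_4$ and $c_4=0$. It is surjective because any solution, together with $c_4=0$, satisfies (\ref{5equations}) and so gives an element of $H$ with $p_2=0$.

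Second, I will check that this bijection is a group isomorphism. The coefficients $c_i$ depend additively on $p$, because cube roots are additive in characteristic $3$. So addition in $H_0$ corresponds to coordinatewise addition of the tuples $(c_0,c_1,c_2,c_3)$. Correspondingly, the solution set is closed under addition, since every equation is additive: each is a sum of additive polynomials in the $c_i$. It is also closed under multiplication by $\mathbb F_3$. Hence the solution set is an $\mathbb F_3$-vector space isomorphic to $H_0\cong {\rm MW}(S)\cong(\mathbb Z/3\mathbb Z)^r$. Finiteness follows because $c_0,c_3\in\mathbb F_3$ and (\ref{c_2}) has at most $9$ roots.

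The main obstacle is only bookkeeping: confirming that dropping equation (1) of (\ref{3}) loses no information, and checking that the scaling in $p$ ($c_i$ versus $c_i^3$) matches the additive structure. Both are straightforward.
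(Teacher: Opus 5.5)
Your proposal is correct and is essentially the paper's own argument: the paper also identifies the solution set with $H_0$ by setting $c_4=0$ in (\ref{5equations}), and then invokes Theorem \ref{main}, with the group structure coming from the additivity of the equations. You make explicit what the paper calls clear, namely that equation (1) of (\ref{3}) may be replaced by (\ref{c_2}) once (2) and $c_0^3=c_0$ hold, and that $p\mapsto(c_0,\dots,c_3)$ is additive; as in Section 5, your surjectivity step needs $U$ to contain polynomials of degree $4$ (solutions with $c_3\neq 0$ have degree $4$), and this is exactly the bound $\deg p\le \ell/2-1$ obtained in the proof of Theorem \ref{main}.
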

\begin{proof} 
It is clear that the set of solutions forms a finite additive group. Since the characteristic is 3,
every element of this group has order 3, except for the zero element.
Hence, the group is isomorphic to $({\mathbb Z}/3{\mathbb Z})^r$ for some non-negative integer $r$.
By (\ref{5equations}) and Theorem \ref{main}, this $r$ is precisely 
the Mordell--Weil torsion rank of the given quasi-elliptic K3 surface.
\end{proof}

We now calculate some examples using this theorem. First, we consider the examples
in Ito \cite[Example 4.10]{I}. We denote by $s$ the stable rank of the Frobenius map
on ${\rm H}^1(C, {\mathcal O}_C)$, and
by $r$ the Mordell--Weil torsion rank of the quasi-elliptic K3 surface $f: S \longrightarrow {\mathbb P}^1$.

(1) Let $a_{12}(t) = t^{10} + a^3t^8 + t^7 + t^2$ where $a^3$ is a primitive fifth root of unity.

Since $\operatorname{char} k = 3$, $a$ is also a primitive fifth root of unity.
By Corollary \ref{H}, we have $s = 4$.
In this case the equations \eqref{3} become
$$
\left\{
\begin{array}{l}
    ({\rm i}) \qquad c_2 = c_1^3 - a c_1 - \beta\\
    ({\rm ii}) \qquad c_1 = c_2^3 \\
    ({\rm iii}) \qquad c_4^3 = c_2+ \alpha a\\
\end{array}
\right.
$$
If $p(t) \in H_0$, then $c_4 = 0$. Therefore, by $({\rm iii})$ we have $c_2 = -\alpha a$.
It follows from (ii) that  $c_1 = - \alpha a^3$, since $\alpha \in {\mathbb F}_3$.
Substituting these values of $c_1$ and $c_2$ into (i), and using $a^5 = 1$ we obtain
$\alpha a = \beta$. Since $\alpha, \beta \in {\mathbb F}_3$ and $a$ is 
a primitive fifth root of unity, this equation can hold only if $\alpha = \beta = 0$.
Therefore, $c_1 = c_2 = 0$. Hence $p(t) =0$. This shows that $H_0 = \{0\}$,
and consequently $r = 0$.

In a similar way, we obtain the following results for the remaining examples 
in \cite[Examples 4.10]{I},
which are consistent with the results obtained there.

(2)  Let $a_{12}(t) = t^{10} + t^5 + t^2$. \quad $H_0 =\langle t \rangle$, $r = 1, s = 4$.

(3)  Let $a_{12}(t) = t^{10} + t^7 + t^4 + t^2$. \quad $H_0 =\langle 1, t^4 -t \rangle$,  
$r = 2, s = 4$.

(4)  Let $a_{12}(t) = t^{10} + t^8 + t^2$.

Let $\gamma$ be a root of $x^8 - x^2 - 1 = 0$. 
Then $H_0 =\langle t^2, \gamma t^3, \gamma^3t^3 + 1)\rangle$ and $r = 3, s = 4$.

(5)  Let $a_{12}(t) = t^{10} + t^2$.

Let $\gamma$ be a primitive eighth root of unity. 
Then $H_0 =\langle 1, \gamma t, \gamma^3t^3, t^4 \rangle$ and $r = 4, s = 4$.

\begin{theorem}\label{r=4}
Let $f : S \longrightarrow {\mathbf P}^1$ be a quasi-elliptic K3 surface 
with 10 singular fibers of type ${\rm IV}$. If the Mordell--Weil torsion rank $r$ is maximal, 
namely $r =4$,
the surface is unique up to isomorphism and is given by $y^2 = x^3 - (t^{10} + t^2)$ .
\end{theorem}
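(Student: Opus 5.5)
We argue directly from Theorem \ref{r}: when $r=4$, the condition that every element of $H$ lies in $H_0$ will force $a_4=a_5=a_7=a_8=0$.

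\textbf{Step 1: $r=4$ forces $H=H_0$.} By Ito's normalization we may take
$a_{6n}(t)=t^{10}+a_8^3t^8+a_7^3t^7+a_5^3t^5+a_4^3t^4+t^2$. By Theorem \ref{main}, $r=\dim_{{\mathbb F}_3}H_0$. By Theorem \ref{H}, $\dim_{{\mathbb F}_3}H\le 4$. Since $H_0\subset H$, $r=4$ forces $H_0=H$ and $s=4$, so case (i) of Theorem \ref{H} holds and $a_4a_8\neq 1$. Hence, for each $(\alpha,\beta)\in{\mathbb F}_3^2$, equation \eqref{c_2} has $9$ distinct roots $c_2$. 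These give all $81$ elements of $H$, with $c_1$ and $c_4$ determined by (2) and (3) of \eqref{3}. The equality $H=H_0$ therefore says that
$$a_5c_1+a_7c_2+a_8\alpha+a_4\beta=0$$
for every such triple.

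\textbf{Step 2: extract the coefficients by specializing $(\alpha,\beta)$.} Let $V$ be the set of the $9$ roots of $X^9-(a_4^3+a_8)X^3+(a_4a_8-1)X$. It is the kernel of an additive polynomial, hence an ${\mathbb F}_3$-space of order $9$ containing $0$.
\begin{itemize}
\item Take $\alpha=\beta=0$. Then $c_2$ ranges over $V$ and $c_1=c_2^3-a_4c_2$. The vanishing condition says that the polynomial $a_5X^3+(a_7-a_4a_5)X$, of degree at most $3$, vanishes at $9$ points. So it is identically zero, which gives $a_5=0$ and then $a_7=0$.
\item Take $\alpha=0$, $\beta=1$. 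Since $a_7=0$, equation \eqref{c_2} is unchanged, so $c_2=0$ is a solution, and then $c_1=0$. The condition reduces to $a_4\cdot 1=0$, so $a_4=0$.
\item Take $\alpha=1$, $\beta=0$. Since $a_5=0$, the constant term $(a_8a_5-a_5^3)\alpha$ vanishes, so again $c_2=c_1=0$ is a solution. The condition gives $a_8=0$.
\end{itemize}
Thus $a_{6n}(t)=t^{10}+t^2$. The surface is then $y^2=x^3+(t^{10}+t^2)$, and the substitution $x\mapsto -x$ (combined with $y\mapsto \sqrt{-1}\,y$) turns this into $y^2=x^3-(t^{10}+t^2)$, so the sign is irrelevant.

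\textbf{Step 3: existence and uniqueness.} Example (5) shows that $t^{10}+t^2$ indeed gives $r=4$, so the maximal case occurs. Uniqueness up to isomorphism follows because every quasi-elliptic K3 surface with $10$ fibers of type ${\rm IV}$ is isomorphic to one in Ito's normal form, and we have shown that the normal form is pinned down completely when $r=4$.

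The main point requiring care is the first specialization in Step 2. We need $V$ to have exactly $9$ elements, which comes from $a_4a_8\ne1$, itself deduced from $s=4$. The degree-$3$ bound is then what kills $a_5$ and $a_7$. After that, the remaining cases are immediate by plugging in $c_2=0$.
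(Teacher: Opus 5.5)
Your proof is correct and is essentially the paper's argument: $r=4$ forces all $81$ solutions to satisfy $c_4=0$. The resulting condition $a_5c_2^3+(a_7-a_4a_5)c_2+(a_8-a_5^2)\alpha+a_4\beta=0$ has degree at most $3$ in $c_2$ but holds at $9$ distinct roots, so all its coefficients vanish. The differences are cosmetic: you justify distinctness of the roots explicitly via $s=4$ and $a_4a_8\neq 1$, and you get $a_4=a_8=0$ by evaluating at the root $c_2=0$ rather than reading off the constant term for all $(\alpha,\beta)$ at once.
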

\begin{proof} For $r = 4$, all roots of (\ref{c_2}) are distinct, and must satisfy $c_4 = 0$ in (\ref{3equations}). Substituting (1) in (\ref{3equations}) into (3) in (\ref{3equations}), 
we obtain
$$
  c_4 = a_5c_2^3 +(a_7 -a_5a_4)c_2 + (a_8 - a_5^2)\alpha + a_4\beta.
$$
The equation (\ref{c_2}) has 9 roots, whereas the polynomial in $c_2$ defined by
$c_4= 0$ has degree  3. Therefore, this polynomial must be identically zero. 
Thus, 
$$
  a_5 =0, a_7 -a_5a_4= 0, (a_8 - a_5^2)\alpha + a_4\beta = 0.
$$
Since $\alpha$ and $\beta$ are arbitrary elements of ${\mathbb F}_3$,
we further obtain
$$
   a_8 - a_5^2 =0, a_4 =0.
$$
Hence, $a_4 = a_5 = a_7 = a_8 =0$, as desired.
\end{proof}

Finally, we give some examples in which $s$ is smaller than $4$.
\begin{example} 
(1) Let $a_4 = a_5 = a_8 = 1$ and $a_7 = -1$. Nemely, $a_{12}(t) = t^8 - t^7 + t^5 + t^4$. 
Then, $a_4a_8 = 1$ and $a_4^3 + a_8 = - 1 \neq 0$. Therefore, by Theorem \ref{H}
we have $s = 3$. In this case, by (\ref{c_2}), we have 
$c_2^3 + c_2 + \beta = 0$. It follows that 
$$
c_4^4 = c_1 + \alpha + \beta -c_2 = c_2^3 + c_2 + \beta = 0.
$$
Therefore, we obtain $r = s = 3$.

(2) Let $\xi$ be a primitive eighth root of unity.
Let $a_4 = \xi$, $a_8 = \xi^7$ and $a_5 = a_7 = 0$. 
Nemely, $a_{12}(t) = \xi^7 t^8  + \xi t^4$. 
Since $a_4a_8 = 1$ and $a_4^3 + a_8 = 0$,
Theorem \ref{H} gives $s = 2$.
By (\ref{c_2}), we obtain $c_2 = 0$. 
By the equations in Theorem \ref{r}, we have 
$$
c_0 = \alpha, c_3 = \beta, \xi^7\alpha + \xi \beta= 0 \quad \quad (\alpha, \beta \in {\mathbb F}_3).
$$
Therefore, we have $\beta = - \xi^6 \alpha$.
However, $\xi^6 \not\in {\mathbb F}_3$. Therefore, $\alpha = \beta = 0$
and hence $c_1 = 0$. Consequently, we obtaion $r = 0$.
\end{example}

\begin{remark}
By a similar calculation to the one in the above examples, it is possible to solve 
the system of equations in Theorem \ref{r} and 
hence to determine the number of solutions explicitly. This allows us to determine
the Mordell--Weil rank $r$. Although the resulting calculation is somewhat tedious, 
it yields the following classification.

\medskip
\noindent
\textbf{(I) Case: $a_5 \neq 0$.}

Set
$$
\begin{array}{rl}
 A_{\alpha} & = a_5(a_8 - a_5^2)^3(a_7^3 -a_5a_8^2)^3 \\
    &\quad - (a_4a_5-a_7)(a_8 -a_5^2)(a_7^3 - a_5a_8^2)(-a_4a_7^3a_5 + a_7^4 + a_8a_7a_5^3 -a_5^4)^2\\
  & \quad + (a_5^2 - a_8)(-a_4a_7^3a_5 + a_7^4 + a_8a_7a_5^3 -a_5^4)^3,  \\
A_{\beta} &= a_5(a_4a_7^3 + a_4a_8a_5^3 - a_4^3a_5 - a_7a_5^4)^3 \\
  &\quad  -(a_4a_5 -a_7)(a_4a_7^3 + a_4a_8a_5^3 - a_4^3a_5 - a_7a_5^4)(-a_4a_7^3a_5 + a_7^4 + a_8a_7a_5^3 - a_5^4)^2\\
  & \quad -a_4(a_4a_7^3a_5 + a_7^4 + a_8a_7a_5^3 -a_5^4)^3.
\end{array}
$$
Then the Mordell--Weil rank is given by the following:

\textbf{(I-1) Case: $a_7 -a_4a_5 \neq 0$.}
\begin{itemize}
\item[({\rm i})] If $A_{\alpha} = A_{\beta} = 0$, then $r=3$.
\item[({\rm ii})] If $A_{\alpha} \neq 0$, $A_{\beta} \neq 0$ and 
$A_{\beta}/A_{\alpha} \not\in {\mathbb F}_3$, then $r = 1$.
\item[({\rm iii})] Otherwise, $r= 2$.
\end{itemize}

\textbf{(I-2) Case: $a_7 -a_4a_5 = 0$.}
\begin{itemize}
\item[({\rm i})] If $A_{\alpha} = A_{\beta} = 0$, then $r=2$.
\item[({\rm ii})] If $A_{\alpha} \neq 0$, $A_{\beta} \neq 0$ and 
$A_{\beta}/A_{\alpha} \not\in {\mathbb F}_3$, then $r = 0$.
\item[({\rm iii})] Otherwise, $r= 1$.
\end{itemize}

\medskip
\noindent 
\textbf{(II) Case: $a_5 = 0$.}

\textbf{(II-1) Case: $a_7 \neq 0$.}

Set
$$
\begin{array}{rl}
  B_{\alpha} &= a_8^9 -(a_4^3 + a_8)a_7^6a_8^3 +(a_4a_8 -1)a_7^8a_8,\\
  B_{\beta} &= a_4^9 -(a_4^3 + a_8)a_4^3a_7^6 + (a_4a_8 -1)a_4a_7^8 + a_7^{10}.
\end{array}
$$
Then:
\begin{itemize}
\item[({\rm i})] If $B_{\alpha} = B_{\beta} = 0$, then $r=2$.
\item[({\rm ii})] If $B_{\alpha} \neq 0$, $B_{\beta} \neq 0$ and 
$B_{\beta}/B_{\alpha} \not\in {\mathbb F}_3$, then $r = 0$.
\item[({\rm iii})] Otherwise, $r= 1$.
\end{itemize}

\textbf{(II-2) Case: $a_7 = 0$.}

\quad \textbf{(II-2-1) Case: $a_4\neq 0$, $a_8 \neq 0$ and $a_8/a_4 \not\in {\mathbb F}_3$.}
\begin{itemize}
\item[({\rm i})] If $a_4a_8 \neq 1$, then $r=2$.
\item[({\rm ii})] If $a_4a_8 = 1$ and $a_3^3 + a_8 \neq 0$, then $r = 1$.
\item[({\rm iii})] If $a_4a_8 = 1$ and $a_3^3 + a_8 = 0$,  $r= 0$.
\end{itemize}

\quad \textbf{(II-2-2) Case: either $a_4 = 0$, or $a_4 \neq 0$ and $a_8/a_4 \in {\mathbb F}_3$.}

\begin{itemize}
\item[({\rm i})] If $a_4 = 0$ and $a_8 = 0$, then $r=4$ (cf. Theorem \ref{r=4}).
\item[({\rm ii})] If $a_4a_8 \neq 1$ and $(a_4, a_8) \neq (0, 0)$, then $r=3$.
\item[({\rm iii})] If $a_4a_8 = 1$ and $a_3^3 + a_8 \neq 0$, then $r = 2$.
\item[({\rm iv})] If $a_4a_8 = 1$ and $a_3^3 + a_8 = 0$, then $r= 1$.
\end{itemize}

\end{remark}

For K3 surfaces, the cases other than the one with ten singular fibers of 
type ${\rm IV}$ can also be treated by our method. However, 
they have already been analyzed in detail by Ito \cite{I}.

\end{document}